\documentclass[a4paper,11pt]{amsart}
\usepackage{amsmath,amssymb,amsthm,mathtools}
\usepackage{hyperref,enumitem,xcolor}
\usepackage{cite}

\DeclareMathOperator{\sgn}{sgn}
\DeclareMathOperator{\supp}{supp}
\DeclareMathOperator{\ext}{ext}

\newtheorem{theorem}{Theorem}
\newtheorem{lemma}[theorem]{Lemma}
\newtheorem{proposition}[theorem]{Proposition}

\theoremstyle{remark}

\theoremstyle{definition}

\title[Plasticity of the unit ball of the real Banach space $\ell_\infty$]{Plasticity of the unit ball of the real Banach space $\ell_\infty$}

\author{Rainis Haller}
\address{Institute of Mathematics and Statistics, University of Tartu, Estonia}
\email{rainis.haller@ut.ee}

\author{Nikita Leo}
\address{Institute of Mathematics and Statistics, University of Tartu, Estonia}
\email{nikita.leo@ut.ee}

\subjclass[2020]{46B20, 47H09}
\keywords{Non-expansive map, unit ball, plastic metric space, extreme point, $\ell_\infty$}
\thanks{This work was supported by the Estonian Research Council grant
PRG1901.}
\date{}

\begin{document}
\begin{abstract}
We prove that the closed unit ball of the real Banach space $\ell_\infty$ is plastic, that is, every non-expansive bijection from the unit ball onto itself is an isometry. 
The main step is to show that every non-expansive bijection of this ball maps extreme points to extreme points. This is done by using elementary coverings of the unit ball by balls of radius one. The conclusion then follows from a theorem by Fakhoury.
The same argument also shows that, for arbitrary $\Gamma$, every non-expansive bijection of $B_{\ell_\infty(\Gamma)}$ preserves extreme points.
\end{abstract}

\maketitle
\section{Introduction}
A mapping between metric spaces is called \emph{non-expansive} if it is $1$-Lipschitz. Following Naimpally, Piotrowski, and Wingler \cite{MR2178720}, a metric space $M$ is called \emph{plastic} if every non-expansive bijection $F\colon M\to M$ is an isometry.

A natural question asks whether the closed unit ball $B_X$ of every real Banach space $X$ is plastic. This problem was explicitly posed and studied by Cascales, Kadets, Orihuela, and Wingler \cite{MR3534519}, who proved that the unit ball is plastic in every strictly convex Banach space. 
Since then, several further positive results have been obtained, including the cases of $\ell_1$ and $c$; see, for instance, \cite{MR3906312, MR4633724, MR4442730, zbMATH06742079, MR3772659, zbMATH07428665}.

We shall use a result of Fakhoury \cite{MR4633724}, which says that for $B_{\ell_\infty}$ a non-expansive bijection preserving extreme points is an isometry.


Levchenko and Zavarzina \cite{arXiv:2605.06622} recently proved plasticity results for the unit spheres of $\ell_1$, $\ell_\infty$, and $c$. Their result for $S_{\ell_\infty}$ does not imply the result below for $B_{\ell_\infty}$.

Our main result is the following.

\begin{theorem}\label{thm: ell-infty-plastic}
    Let $B$ be the closed unit ball of the real Banach space $\ell_\infty$. Then every non-expansive bijection $F\colon B\to B$ is an isometry. Equivalently, $B_{\ell_\infty}$ is plastic.
\end{theorem}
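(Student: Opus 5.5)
The plan is to reduce Theorem~\ref{thm: ell-infty-plastic} to the result of Fakhoury~\cite{MR4633724} quoted above: a non-expansive bijection of $B_{\ell_\infty}$ that maps extreme points to extreme points is an isometry. So the whole task is to prove that every non-expansive bijection $F\colon B\to B$ satisfies $F(\ext B)\subset \ext B$. Recall that $\ext B$ consists of the sequences $x$ with $|x_n|=1$ for every $n$.

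\emph{Step 1: a metric description of $\ext B$.} For $n\in\mathbb N$ and a sign $\varepsilon=\pm1$, a direct computation gives
\[
B\cap B(\varepsilon e_n,1)=\{x\in B:\ \varepsilon x_n\ge 0\}.
\]
Hence $B$ is covered by the two unit balls $B(\pm e_n,1)$. More generally, for $s\in\{-1,0,1\}^{\mathbb N}$ with finite support, the sets $B\cap B(s,1)$ are ``orthant pieces'' of $B$. I would look for a characterization of extreme points using only the metric and coverings of $B$ by unit balls. The candidate is: $x\in\ext B$ iff $\|x-y\|=2$ for some $y$, and every covering of $B$ by two unit balls centred in $B$ has exactly one member containing $x$ together with a point at distance $2$ from $x$. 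Equivalently, $x$ is extreme iff for every $n$ there is $y\in B$ with $|x_n-y_n|=2$, and this coordinatewise ``diametral'' property can be expressed through the pieces $B\cap B(\pm e_n,1)$.

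\emph{Step 2: transport along $F$.} Non-expansiveness gives
\[
F\bigl(B\cap B(c,1)\bigr)\subset B\bigl(F(c),1\bigr)\quad\text{and}\quad \|F(x)-F(y)\|\le\|x-y\|.
\]
Surjectivity lets us pull back any covering $B=\bigcup_i \bigl(B\cap B(d_i,1)\bigr)$ to a covering of $B$ by the sets $F^{-1}\bigl(B(d_i,1)\bigr)$. I would argue by contradiction. Suppose $x\in\ext B$ but $|F(x)_n|<1$ for some $n$. Use the elementary covering $B\cap B(\pm e_n,1)$ of the target, and the fact that $F(x)$ lies strictly inside $\{|u_n|<1\}$. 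Then build, using bijectivity, a finite configuration of points $x=x^0,x^1,\dots$ in $B$: these are extreme points obtained by flipping finitely many signs of $x$, with mutual distances $2$. The constraint is that their images must lie in pairwise-distance-$\le 2$ pieces. The aim is to show that the images cannot realize what injectivity forces.

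Injectivity combined with a compactness or degree-type argument on finite-dimensional faces should be used as in Cascales--Kadets--Orihuela--Wingler. Concretely, restrict to the finite-dimensional cubes $\{y:\ y_k=x_k \text{ for } k\notin K\}$ with $K$ finite. Then use Brouwer invariance of domain on $F$ composed with coordinate projections to show that an interior-type point cannot be the image of a vertex.

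\emph{Main obstacle.} The hard step is Step~2. Since $F^{-1}$ is not known to be continuous, the pulled-back sets $F^{-1}(B(d,1))$ need not be closed or convex. Moreover, in the infinite-dimensional $\ell_\infty$ no compactness is available. I would first try to avoid topology entirely. The idea is to find, for an extreme $x$ and a coordinate $n$, a point $y$ with $\|x-y\|=2$, and to show that $\|F(x)-F(y)\|=2$ via a chain of coverings by unit balls $B(\pm e_m,1)$ and $B(s,1)$. Here one would exploit that $B$ is the union of $B\cap B(s,1)$ over finitely many $s$ in only a weak sense. If such a ``distance-$2$ preservation'' for extreme points can be established coordinatewise, then $|F(x)_n|=1$ for all $n$ follows, hence $F(x)\in\ext B$, and Fakhoury's theorem finishes the proof.
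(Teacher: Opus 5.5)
Reducing to Fakhoury's theorem is exactly how the paper concludes. However, the whole content of the proof is the inclusion $F(\ext B)\subset\ext B$, and your proposal does not establish it. Step~1 is only a ``candidate'', and its first clause ($\|x-y\|=2$ for some $y\in B$) just describes the unit sphere. Step~2 is explicitly left open, and the tools you suggest would not close it:
\begin{itemize}
\item The coordinatewise condition ``for every $n$ there is $y$ with $|x_n-y_n|=2$'' cannot be transported by $F$ unless you already know which coordinate of $F(x)$ corresponds to coordinate $n$ of $x$. Without that, $\|F(x)-F(y)\|=2$ only gives $\|F(x)\|=1$.
\item Invariance of domain on the faces $\{y: y_k=x_k,\ k\notin K\}$ fails. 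Nothing says $F$ maps such a face into a face, and composing with a coordinate projection destroys injectivity.
\item Pulling a covering back by $F^{-1}$ is trivial and gives nothing. The useful direction is the push-forward. From your own inclusion $F(B\cap B(c,1))\subset B(F(c),1)$ and surjectivity, $B\subset\bigcup_i B(c_i,1)$ with $c_i\in B$ implies $B\subset\bigcup_i B(F(c_i),1)$.
\end{itemize}

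The first missing idea is a rigidity lemma for such coverings. If $0\neq a,b\in B$ and $B\subset B(a,1)\cup B(b,1)$, then $a$ and $b$ are non-zero opposite multiples of a single $e_i$. To see this, test with the point whose $i$-th coordinate is $-\sgn a_i$, whose $j$-th coordinate is $-\sgn b_j$, and whose other coordinates are $0$. Here is how it is used:
\begin{itemize}
\item Push forward the coverings $B\subset B(te_n,1)\cup B(-\sgn(t)e_n,1)$, $0<|t|\le 1$, using $F(0)=0$ and injectivity. This puts $F([-e_n,e_n])$ on a line $\mathbb{R}e_{\sigma(n)}$ with a sign $\theta_n$.
\item Continuity of $t\mapsto F(te_n)$ and injectivity of $F$ make $\sigma$ injective.
\item Push forward $B\subset B(x,1)\cup\bigcup_{n\in\supp x}B(-\sgn(x_n)e_n,1)$. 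This gives $\supp F(x)\subset\sigma(\supp x)$, and $\theta_nF(x)_{\sigma(n)}$ has the weak sign of $x_n$.
\item Surjectivity of $F$ makes $\sigma$ bijective. Composing with the inverse signed permutation then gives a bijection $G$ that preserves coordinate signs.
\end{itemize}

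Sign preservation alone still only yields $G(x)_n\ge 0$ when $x_n=1$. The second missing ingredient is \cite[Theorem~2.3]{MR3534519}: $G^{-1}(\ext B)\subset\ext B$. For $x\in\ext B$, the point $y=G^{-1}(x)$ is extreme, and $x=G(y)$ has the signs of $y$, so $y=x$. Thus $G$ fixes $\ext B$, hence $F(\ext B)\subset\ext B$, and Fakhoury's theorem applies.
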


The proof is elementary up to the final application of Fakhoury's result. We use the following simple observation: if $0\neq a,b\in B$ and $B\subset B(a,1)\cup B(b,1)$, then $a$ and $b$ must be non-zero opposite multiples of the same coordinate vector. Applying this to the images of the coverings $B\subset B(e_n,1)\cup B(-e_n,1)$ shows that the intervals $[-e_n,e_n]$ are sent into coordinate lines, up to signs and a permutation of the coordinates. The resulting mapping on coordinates is injective, and a support argument then shows that it is in fact a bijection. After composing with the inverse signed permutation, the resulting map preserves the signs of coordinates. This is enough to show that all extreme points of $B$ are fixed by the normalised map, and hence that the original map sends extreme points to extreme points.

At the end, we add a short remark on $\ell_\infty(\Gamma)$. The same coordinate part of the proof works for arbitrary $\Gamma$ and gives preservation of extreme points. We do not claim plasticity of $B_{\ell_\infty(\Gamma)}$ for arbitrary $\Gamma$.

\section{Proof of the main theorem}


Let $F\colon B\to B$ be a non-expansive bijection. Then $F(0)=0$. Indeed, if $a=F(0)$, then, by surjectivity, $B\subset B(a,1)$. If $a\neq 0$, choose $k\in\mathbb N$ such that $a_k\neq 0$, and choose $x\in B$ with $x_k=-\sgn(a_k)$. Then $\|a-x\|\geq 1+|a_k|>1$, a contradiction. Hence $a=0$.

\begin{lemma}\label{lem:1}
    Let $a_i\in B$ and $r_i>0$ for every $i\in I$. If $B\subset \bigcup_{i\in I}B(a_i,r_i)$, then $B\subset \bigcup_{i\in I}B(F(a_i), r_i)$.
\end{lemma}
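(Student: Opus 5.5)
The plan is to use surjectivity of $F$ to transport the covering to the image side, and to let non-expansiveness control the distances. Let $y\in B$ be arbitrary. Since $F\colon B\to B$ is a bijection, there is a point $x\in B$ with $F(x)=y$. By hypothesis $B\subset\bigcup_{i\in I}B(a_i,r_i)$, so there is an index $i\in I$ with $\|x-a_i\|\le r_i$.

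Since $F$ is non-expansive and both $x$ and $a_i$ lie in $B$, we get
\[
\|y-F(a_i)\|=\|F(x)-F(a_i)\|\le \|x-a_i\|\le r_i .
\]
Hence $y\in B(F(a_i),r_i)$. As $y\in B$ was arbitrary, this gives $B\subset\bigcup_{i\in I}B(F(a_i),r_i)$.

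There is no real obstacle here. The only point needing care is that surjectivity of $F$ is essential: every $y\in B$ must have a preimage, so that the covering of the domain is transported to all of $B$. Injectivity is not used at all. I also take the balls $B(a,r)$ to be closed, as they are in the argument showing $F(0)=0$. With open balls the same argument would go through, since then $\|x-a_i\|<r_i$ and the inequality would also be strict.
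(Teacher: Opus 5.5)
Your proof is correct and is the same argument as the paper's. You take a preimage $x$ of $y$, choose $i$ with $x\in B(a_i,r_i)$, and use non-expansiveness to get $\|y-F(a_i)\|\le\|x-a_i\|\le r_i$; your side remarks (only surjectivity and non-expansiveness are used, and open balls would work equally well) are also accurate.
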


\begin{proof}
    Fix $y\in B$. Let $x\in B$ be such that $F(x)=y$, and let $i\in I$ be such that $x\in B(a_i,r_i)$. Then 
    \begin{align*}
        \|y-F(a_i)\|=\|F(x)-F(a_i)\|\leq \|x-a_i\|\leq r_i.
    \end{align*}
    Thus $y\in B(F(a_i),r_i)$.
\end{proof}

\begin{lemma}\label{lem:2}
    Let $0\neq a,b\in B$ be such that $B\subset B(a,1)\cup B(b,1)$. Then there exist $i\in \mathbb N$ and $\alpha,\beta>0$ such that either $a=\alpha e_i$ and $b=-\beta e_i$, or $a=-\alpha e_i$ and $b=\beta e_i$.
\end{lemma}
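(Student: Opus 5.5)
The idea is to test the covering $B\subset B(a,1)\cup B(b,1)$ against well-chosen points $x\in B$ whose coordinates equal $\pm1$. For $x\in B$ we have $\|x-a\|\le 1$ if and only if $|x_k-a_k|\le 1$ for every $k$. When $x_k=-\sgn(a_k)$ and $a_k\neq 0$, this fails, since $|x_k-a_k|=1+|a_k|>1$. So for any $x\in B$ we set
\[
N_a(x)=\{k: a_k\neq 0,\ x_k a_k<0,\ |x_k|=1\}.
\]
Then $x\notin B(a,1)$ whenever $N_a(x)\neq\emptyset$, and similarly for $b$. The whole proof amounts to choosing sign vectors $x$ that escape both balls at once.

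First I show that $a$ and $b$ each have exactly one nonzero coordinate, at the same index. Fix $i$ with $a_i\neq0$ and $j$ with $b_j\neq0$. If $i\neq j$, take $x$ with $x_i=-\sgn(a_i)$, $x_j=-\sgn(b_j)$, and the other coordinates $0$. Then $x\notin B(a,1)$ because $|x_i-a_i|>1$, and $x\notin B(b,1)$ because $|x_j-b_j|>1$. This contradicts the covering. Hence every nonzero coordinate of $a$ and every nonzero coordinate of $b$ share one index. Since both vectors are nonzero, their supports are equal to a single common index $i$. So $a=a_ie_i$ and $b=b_ie_i$.

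Next I check the signs. If $a_i$ and $b_i$ had the same sign, take $x=-\sgn(a_i)e_i$. Then $|x_i-a_i|>1$ and $|x_i-b_i|>1$, so $x$ lies in neither ball, again a contradiction. Thus $a_i$ and $b_i$ have opposite signs. Setting $\alpha=|a_i|$ and $\beta=|b_i|$ gives one of the two stated forms.

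There is no real obstacle here. The only point needing care is the support argument in the second paragraph: a single test vector with two $\pm1$ coordinates handles every pair $i\neq j$ at once. This already forces both supports to be a common singleton, so no separate argument for $|\supp a|\ge2$ is needed.
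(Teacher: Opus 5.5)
Your proposal is correct and follows essentially the same argument as the paper. You use the two-coordinate test vector with entries $-\sgn(a_i)$ and $-\sgn(b_j)$ to force a common singleton support, and the point $-\sgn(a_i)e_i$ to force opposite signs. The only cosmetic difference is that you handle both supports in one symmetric pass, whereas the paper runs the argument once and then again with the roles of $a$ and $b$ interchanged.
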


\begin{proof}
    Choose $i$ such that $a_i\neq 0$. Suppose that for some $j\neq i$ one has $b_j\neq 0$. Let $x\in B$ be such that $x_i=-\sgn(a_i)$, $x_j=-\sgn(b_j)$, and all other coordinates are $0$. Then $|x_i-a_i|=1+|a_i|>1$, so $x\notin B(a,1)$. Also $|x_j-b_j|=1+|b_j|>1$, so $x\notin B(b,1)$, a contradiction. Therefore $b_j=0$ for every $j\neq i$. The same argument with the roles of $a$ and $b$ interchanged gives $a_j=0$ for every $j\neq i$. Thus $a=a_i e_i$ and $b=b_i e_i$. If $a_i$ and $b_i$ had the same sign, then $-\sgn(a_i)e_i\notin B(a,1)\cup B(b,1)$, a contradiction. Hence, $a_i$ and $b_i$ have different signs.
\end{proof}

Fix $n\in\mathbb N$. Since $B\subset B(e_n,1)\cup B(-e_n,1)$, Lemma~\ref{lem:1} gives $B\subset B(F(e_n),1)\cup B(F(-e_n),1)$. Since $F(0)=0$ and $F$ is injective, $F(e_n)\neq 0$ and $F(-e_n)\neq 0$. By Lemma~\ref{lem:2}, there exist $\sigma(n)\in\mathbb N$, $\theta_n\in\{-1,1\}$, and $\alpha_n,\beta_n>0$ such that $F(e_n)=\theta_n\alpha_ne_{\sigma(n)}$ and $F(-e_n)=-\theta_n\beta_ne_{\sigma(n)}$.

If $0<t\leq 1$, then $B\subset B(te_n,1)\cup B(-e_n,1)$. Since $F(te_n)\neq 0$, Lemma~\ref{lem:2} implies that $F(te_n)=\theta_n \gamma_n(t)e_{\sigma(n)}$ for some $\gamma_n(t)>0$. If $-1\leq t<0$, then $B\subset B(e_n,1)\cup B(te_n,1)$. Since $F(te_n)\neq 0$, Lemma~\ref{lem:2} implies that $F(te_n)=\theta_n \gamma_n(t)e_{\sigma(n)}$ for some $\gamma_n(t)<0$. Set also $\gamma_n(0)=0$.
Since $F(0)=0$, we have $F(0e_n)=\theta_n\gamma_n(0)e_{\sigma(n)}$.

The function $t\mapsto F(te_n)$ is continuous.
Therefore $F([-e_n,e_n])$ is a non-trivial interval in $\mathbb R e_{\sigma(n)}$ which contains $0$ as an interior point.

We next show that $\sigma$ is injective. Suppose that for some $m\neq n$ one has $\sigma(m)=\sigma(n)$. Then $F([-e_m,e_m])$ and $F([-e_n,e_n])$ are both non-trivial intervals in the same line $\mathbb Re_{\sigma(n)}$, and $0$ is an interior point of both. Thus, they have a common non-zero element, say $F(se_m)=F(te_n)$ for some $s,t\neq 0$. By injectivity of $F$, we have $se_m=te_n$, which is impossible since $m\neq n$. Hence $\sigma$ is injective.

Fix $x\in B$, and recall that $\supp x=\{n\in\mathbb N : x_n\neq 0\}$. We claim that 
\[
B\subset B(x,1)\cup\bigcup_{n\in\supp x}B(-\sgn(x_n)e_n,1).
\]
Indeed, let $y\in B$ and suppose that $y\notin \bigcup_{n\in\supp x}B(-\sgn(x_n)e_n,1)$.
Then, for every $n\in\supp x$, $x_n$ and $y_n$ have the same sign, and thus $|x_n-y_n|\leq 1$. If $n\notin \supp x$, then $x_n=0$, and so $|x_n-y_n|=|y_n|\leq 1$. Hence $y\in B(x,1)$.

By Lemma~\ref{lem:1}, $B\subset B(F(x),1)\cup\bigcup_{n\in\supp x}B(F(-\sgn (x_n) e_n),1)$. Let $y\in B$ be defined by $y_{\sigma(n)}=\theta_n\sgn (x_n)$ for every $n\in\supp x$, and let all remaining coordinates be $0$. We show that $y\notin B(F(-\sgn (x_n) e_n),1)$ for every $n\in\supp x$. If $x_n>0$, then $-\sgn (x_n)e_n=-e_n$ and $F(-e_n)=-\theta_n\beta_n e_{\sigma(n)}$, whence
\[
\bigl|y_{\sigma(n)}-(F(-\sgn (x_n) e_n))_{\sigma(n)}\bigr|=|\theta_n+\theta_n \beta_n|=1+\beta_n>1.
\]
If $x_n<0$, then $-\sgn (x_n)e_n=e_n$ and $F(e_n)=\theta_n\alpha_n e_{\sigma(n)}$, whence
\[
\bigl|y_{\sigma(n)}-(F(-\sgn(x_n) e_n))_{\sigma(n)}\bigr|=|-\theta_n-\theta_n \alpha_n|=1+\alpha_n>1.
\]
Therefore $y\in B(F(x),1)$.

If $x_n>0$, then $y_{\sigma(n)}=\theta_n$, and since $|y_{\sigma(n)}-(F(x))_{\sigma(n)}|\leq 1$, we get $\theta_n(F(x))_{\sigma(n)}\geq 0$. Similarly, if $x_n<0$, then $\theta_n(F(x))_{\sigma(n)}\leq 0$.

Let $k\notin \sigma(\supp x)$. Redefine $y$ in the coordinate $k$ by setting $y_k=1$. The modified point still avoids all the balls in the union over $n\in\supp x$. Hence the modified point belongs to $B(F(x),1)$, and so $|1-(F(x))_{k}|\leq 1$. Similarly, redefining $y$ in the coordinate $k$ by setting $y_k=-1$, we obtain $|-1-(F(x))_{k}|\leq 1$. Thus $(F(x))_k=0$. We have proved that 
\[
\supp(F(x))\subset \sigma(\supp x),
\]
and moreover, for every $n$, $x_n>0$ implies $\theta_n(F(x))_{\sigma(n)}\geq 0$, and $x_n<0$ implies $\theta_n(F(x))_{\sigma(n)}\leq 0$.

We now show that $\sigma$ is surjective. Suppose that some $k\notin \sigma(\mathbb N)$. Then $k\notin \sigma(\supp x)$ for every $x\in B$, and therefore $(F(x))_k=0$ for every $x\in B$. Since $F$ is surjective, there exists $x\in B$ such that $F(x)=e_k$. But then $1=(e_k)_k=(F(x))_k=0$, a contradiction. Hence $\sigma$ is a bijection.

Consider the surjective linear isometry $A\colon \ell_\infty \to\ell_\infty$ defined by $(Ax)_{\sigma(n)}=\theta_n x_n$. Set $G=A^{-1}\circ F$. Then $G\colon B\to B$ is a non-expansive bijection. By what we have already proved, for every $x\in B$ and for every $n$,  
\begin{align*}
    &x_n=0\implies (G(x))_n=0,\\
    &x_n>0\implies (G(x))_n\geq 0,\\
    &x_n<0\implies (G(x))_n\leq 0.\\    
\end{align*}

We next show that $G$ fixes the extreme points of $B$. Recall that $\ext B=\{x\in B : x_n=\pm 1\ \text{for every $n$}\}$.

Fix $x\in \ext B$. By \cite[Theorem 2.3]{MR3534519}, $y\coloneqq G^{-1}(x)\in\ext B$. Thus $y_n=\pm 1$ for every $n\in\mathbb N$. Since $G$ preserves signs, if $y_n=1$, then $x_n=(G(y))_n\geq 0$, and hence $x_n=1$. If $y_n=-1$, then $x_n=(G(y))_n\leq 0$, and hence $x_n=-1$. Thus $x=y$ and $G(x)=x$.

It follows that for every $x\in\ext B$, $F(x)=A(G(x))=Ax\in\ext B$. Hence $F(\ext B)\subset \ext B$. By \cite[Corollary~5.3]{MR4633724}, $F$ is an isometry. This proves Theorem~\ref{thm: ell-infty-plastic}.

\section{A remark on \texorpdfstring{$\ell_\infty(\Gamma)$}{ell-infty(Gamma)}}
Let $\Gamma$ be a non-empty set and let $B$ denote the closed unit ball of the real Banach space $\ell_\infty(\Gamma)$.

\begin{proposition}
    Let $F\colon B\to B$ be a non-expansive bijection. Then 
    \[
    F(\ext B)\subset \ext B.
    \]
\end{proposition}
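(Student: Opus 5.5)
The plan is to repeat the argument of Section~2 verbatim, with $\mathbb N$ replaced by $\Gamma$, and to stop just before the appeal to Fakhoury's result. Nothing in that argument used countability or the order of $\mathbb N$; it used only coordinatewise constructions of points of $B$ and the description of $\ext B$.

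First, $F(0)=0$ by the same argument as before. Lemma~\ref{lem:1} holds with the same proof. Lemma~\ref{lem:2} holds for indices $i\in\Gamma$: the test points $x$ built there have at most two nonzero coordinates, so they lie in $B_{\ell_\infty(\Gamma)}$.

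Applying Lemma~\ref{lem:2} to the images of $B\subset B(e_\gamma,1)\cup B(-e_\gamma,1)$, and to the variants with $te_\gamma$, gives for each $\gamma\in\Gamma$:
\begin{itemize}
\item an index $\sigma(\gamma)\in\Gamma$;
\item a sign $\theta_\gamma\in\{-1,1\}$;
\item a function $\gamma_\gamma(t)$ with $F(te_\gamma)=\theta_\gamma\gamma_\gamma(t)e_{\sigma(\gamma)}$, where $\gamma_\gamma$ has the sign of $t$.
\end{itemize}
Continuity and injectivity of $F$ again make $\sigma$ injective.

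Next comes the covering
\[
B\subset B(x,1)\cup\bigcup_{\gamma\in\supp x}B(-\sgn(x_\gamma)e_\gamma,1).
\]
Its proof is coordinatewise, and the test point $y$ with $y_{\sigma(\gamma)}=\theta_\gamma\sgn(x_\gamma)$ on $\sigma(\supp x)$ and $0$ elsewhere is a bounded function on $\Gamma$. So the argument yields $\supp F(x)\subset\sigma(\supp x)$ together with the sign relations. Surjectivity of $\sigma$ follows exactly as before from $F(x)=e_k$.

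The signed permutation $A$ defined by $(Ax)_{\sigma(\gamma)}=\theta_\gamma x_\gamma$ is then a surjective linear isometry of $\ell_\infty(\Gamma)$. Hence $G=A^{-1}\circ F$ is a non-expansive bijection of $B$ that preserves the sign of every coordinate.

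The last step is to show that $G$ fixes $\ext B=\{x : |x_\gamma|=1 \text{ for all }\gamma\}$, and this is the one place where an outside result is used. One needs $G^{-1}(\ext B)\subset\ext B$, which is \cite[Theorem 2.3]{MR3534519}. That theorem is stated for an arbitrary Banach space: the inverse of a non-expansive bijection of the unit ball maps extreme points to extreme points. It therefore applies to $\ell_\infty(\Gamma)$ unchanged, and I expect no obstacle there, only a check that no separability is assumed in it. With that in hand, for $x\in\ext B$ the point $y=G^{-1}(x)$ has all coordinates $\pm1$, and sign preservation forces $x_\gamma=y_\gamma$ for every $\gamma$. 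Thus $G(x)=x$, and so $F(x)=Ax\in\ext B$.

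The main point to watch is simply that every auxiliary point constructed in Section~2 is a bounded function on $\Gamma$, and each of them is, so this is routine. The inference to isometry is omitted because Fakhoury's corollary is only available for $\ell_\infty$.
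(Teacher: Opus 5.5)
Your proposal is correct and follows the paper's own proof essentially step for step. You transfer the coordinate argument verbatim from $\mathbb N$ to $\Gamma$: Lemmas~\ref{lem:1} and~\ref{lem:2}, the construction of $\sigma$ and $\theta_\gamma$, the covering $B\subset B(x,1)\cup\bigcup_{\gamma\in\supp x}B(-\sgn(x_\gamma)e_\gamma,1)$, and surjectivity of $\sigma$. You then normalise by the signed permutation $A$ and combine \cite[Theorem 2.3]{MR3534519} with sign preservation to show that $G$ fixes $\ext B$, stopping before Fakhoury's result exactly as the paper does.
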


\begin{proof}
    The coordinate part of the proof of Theorem~\ref{thm: ell-infty-plastic} goes through verbatim with $\mathbb N$ replaced by $\Gamma$. More precisely, the analogue of Lemma~\ref{lem:2} says that if $0\neq a,b\in B$ and $B\subset B(a,1)\cup B(b,1)$, then $a$ and $b$ are non-zero opposite multiples of the same coordinate vector $e_\gamma$ for some $\gamma\in \Gamma$. Hence, there is an injective map $\sigma\colon \Gamma\to\Gamma$ and signs $\theta_\gamma\in\{-1,1\}$ such that, for every $x\in B$, 
    \[\supp(F(x))\subset \sigma(\supp x)\]
    together with the corresponding sign preservation. The same argument as above shows that $\sigma$ is surjective.

    Composing with the inverse signed permutation, we get a non-expansive bijection $G\colon B\to B$ which preserves coordinate signs. Since $\ext B=\{-1,1\}^\Gamma$, the general extreme point preimage result \cite[Theorem 2.3]{MR3534519} gives $G^{-1}(x)\in\ext B$ for every $x\in \ext B$. The sign-preservation argument from the proof of our main theorem then shows that $G$ fixes every extreme point. Therefore, $F$ sends every extreme point to an extreme point.
\end{proof}


\bibliographystyle{amsplain}
\bibliography{bibliography}
\end{document}